\newcommand{\norm}[1]{\left\Vert#1\right\Vert}
\newcommand{\inpd}[2]{\left\langle #1, #2 \right\rangle}
\newcommand{\set}[1]{\left\{#1\right\}}
\newcommand{\wt}[1]{\widetilde{#1}}
\newcommand{\Hess}{{H}}
\newcommand{\argmin}{ \operatornamewithlimits{argmin} }
\newtheorem{theorem}{Theorem}
\newtheorem{remark}{Remark}
\def\Hess{\mathbf{H}}
\newcommand{\wh}[1]{\widehat{#1}}
\def\mcM{\mathcal{M}}
\def\mcU{\mathcal{U}}
\def\kpa2_2{\frac{\kappa^2}{2}}
\def\sigm2_2{\frac{\sigma^2}{2}}
\def\nab_dot{\nabla\cdot}
\def\Hess{\mathbf{H}}
\def\Pj{\mathbf{P}}
\def\I{\mathbf{I}}
\def\1/N{\frac{1}{N}}
\def\H_1_u{H_u^{-1}}
\def\f12{\frac{1}{2}}
\begin{document}
 
\begin{center}
{\bf \Large Projection Method for Saddle Points of   Energy Functional  in $H^{-1}$ Metric}

\

Shuting Gu\footnote{Corresponding author.  School of Mathematical Sciences, South China Normal University, Guangzhou 510631, PR China. Email: shutinggu@m.scnu.edu.cn }
, Ling Lin\footnote{ School of Mathematics, Sun Yat-sen University, Guangzhou 510275, China. Email: linling27@mail.sysu.edu.cn}
, Xiang Zhou\footnote{ School of Data Science and Department of Mathematics, City University of Hong Kong, Tat Chee Ave, Kowloon, Hong Kong SAR. Email: xiang.zhou@cityu.edu.hk}

\end{center}
 \date{\today}

\section*{Abstract}

  Saddle points play important roles as the transition states
 of  activated process in  gradient system driven by  
 energy functional. However, for the same energy functional,  the saddle points, as well as 
   other stationary points, are different  in different metrics such as the $L^2$ metric and the $H^{-1}$ metric. The saddle point calculation in $H^{-1}$ metric is more challenging with much higher computational cost since 
   it involves higher order derivative in space and the inner product calculation needs to solve another Possion equation to get the $\Delta^{-1}$ operator. 
In this paper, we introduce the projection idea  to the existing saddle point search methods, 
     gentlest ascent dynamics (GAD) and   iterative minimization formulation (IMF),  
     to overcome this numerical challenge due to $H^{-1}$ metric.        
 Our new method in the $L^2$ metric only by carefully incorporates  a simple linear projection step.  We show that our projection method
maintains the same convergence speed of the original GAD and IMF,
but the new algorithm is much faster than the direct method for $H^{-1}$ problem. 
 The  numerical results of saddle points in  the one dimensional Ginzburg-Landau free energy and the two dimensional Landau-Brazovskii free energy in $H^{-1}$ metric are presented to demonstrate the efficiency
 of this new method.

\

{{\bf Keywords}: saddle point, transition state, projection method, gentlest ascent dynamics }
\

{{\bf  Mathematics Subject Classification (2010)}  Primary 65K05, Secondary  82B05	  }


\section{Introduction}

Saddle points have important physical meaning and have been of broad interest in chemistry, physics, biology and material sciences.
 In computational chemistry \cite{Energylanscapes},  one of the most important objects on the potential energy surface
 is the transition state, a special type of the saddle point with index-1, which is defined as the critical point with only one unstable direction.
 Such transition states are the bottlenecks on the most probable transition paths
 between different local wells.
 In recent years, a large number of numerical methods have been proposed and developed to efficiently compute these saddle points. Generally speaking, there are two classes:  path-finding methods and surface-walking methods. The former 
  includes the string method \cite{Ren2013, String2002} and the nudged elastic band method \cite{NEB1998}. These methods are to search the so-called minimum energy path (MEP). The points along the MEP with locally maximum energy value are then the index-1 saddle points. The later methods include the eigenvector following method \cite{Crippen1971}, the dimer method \cite{Dimer1999}, the activation-relaxation techniques \cite{ART1998},
   the gentlest ascent dynamics(GAD) \cite{GAD2011} and the  iterative minimization formulation (IMF) \cite{IMF2014,IMA2015}.
  They evolve a single state on the potential energy surface along the unstable direction, for example, the min-mode direction.
   
  There are different fixed points on different potential energy surfaces.  Here we will address that  even for the same energy functional, different stationary points (metastable states)  and saddle points can be obtained in different metrics such as the $L^2$ metric and the $H^{-1}$ metric. We take the Ginzburg-Landau free energy on a bounded  domain $\Omega$ for example 
  \begin{equation}
 \label{eqn:F_GL0}
F(\phi) = \int_\Omega \Big[ \frac{\kappa^2}{2}
|\nabla \phi(x)|^2+ f(\phi) \Big]\,dx, \quad f(\phi) = (\phi^2-1)^2/4,
\end{equation}
and   the following two gradient flows are commonly used in
physics models, depending on which metric is used for the gradient.
\begin{enumerate}
    \item 
 In $L^2$ metric: the (non-conserved) Allen-Cahn  (AC) equation \cite{AC-EQ}
\begin{equation}\label{AC_eq}
\frac{\partial\phi}{\partial t} = -\frac{\delta F}{\delta\phi}(\phi)
=\kappa^2 \Delta \phi - (\phi^3-\phi);
\end{equation}
and 
\item In $H^{-1}$ metric: the (conserved) Cahn-Hilliard (CH) equation \cite{CH-EQ}
\begin{equation}\label{CH_eq}
\frac{\partial\phi}{\partial t} = \Delta\frac{\delta F}{\delta\phi}=-\kappa^2 \Delta^2 \phi + \Delta (\phi^3-\phi).
\end{equation}
\end{enumerate} 
Here $\frac{\delta F}{\delta \phi}$ is the first order variation of $F$
in the $L^2$ sense. Nowadays, the Allen-Cahn and Cahn-Hilliard equations have been widely used in many complicated moving interface problems in materials science and fluid dynamics through a phase-field approach, for instance, \cite{ShenYang, Fife, Bates, Brachet}.

The inner product and the norm in $H^{-1}$
metric can be rewritten
in terms of  the $L^2$ product as follows:
\begin{equation}\label{-1to2}
 \norm{\phi}_{H^{-1}}^2 = \big\langle (-\Delta)^{-1}\phi,\phi \big\rangle_{L^2}, ~~
 \langle\phi,\psi\rangle_{H^{-1}} = \big\langle(-\Delta)^{-1}\phi,\psi \big\rangle_{L^2},
\end{equation}
 where    $(-\Delta)^{-1}$,  a bounded positive self-adjoint linear operator,
 is the inverse of $-\Delta $ subject to certain boundary condition\cite{Dawson1989}. 
 The dynamics \eqref{AC_eq} and \eqref{CH_eq} are the gradient flows of the same energy functional \eqref{eqn:F_GL0} in $L^2$ metric and $H^{-1}$ metric, respectively. 
 It is clear that these two gradient flows have distinctive dynamics and properties.
The Cahn-Hilliard equation \eqref{CH_eq} preserves the mass $\int_\Omega \phi \, dx$ while the Allen-Cahn  does not.
 We are intertested in the stationary states of these two dynamics.
With the same boundary condition, the stationary states of dynamics \eqref{AC_eq} (with the sufficient regularity such as in the Sobolev $H^4(\Omega)$ space) are  the stationary states of the dynamics \eqref{CH_eq}, but not vice versa.
 
 It  takes  more computational cost to calculate  the stationary points in CH equation than that in AC equation, because
 the dynamics in the $H^{-1}$ metric \eqref{CH_eq} is a fourth order derivative equation in space,  two order higher than that in the $L^2$ metric \eqref{AC_eq}. 
 What is worse is that any computation  involving the inner product calculation in $H^{-1}$ metric needs to calculate the $\Delta^{-1}$ operator (see \eqref{-1to2}) by solving a Poisson equation. So if one only wants to locate the fixed points(stationary points or saddle points) instead of capturing the time evolution in $H^{-1}$ metric, it is much less efficient to use dynamics in $H^{-1}$ metric such as the CH equation.
  
Since the main difference between the dynamics in $L^2$ metric and $H^{-1}$ metric is whether the mass is conservation, our idea to handle the above challenges is to add mass conservation constrains into the $L^2$ metric dynamics. 
This conservation can be enforced by a projection operator. 
In the work of \cite{LLinProj2010},   the projected Allen-Cahn equation  
 \begin{equation}\label{ProAC}
 \frac{\partial u}{\partial t} = \Pj (-\frac{\delta F}{\delta u} )
 \end{equation}
was proposed as  a counterpart of the Cahn-Hillard equation to search different phases in diblock copolymers.  $\Pj$ in \eqref{ProAC} is the orthogonal projection operator onto the confined subspace satisfying the mass conservation.
 The  Cahn-Hilliard equation    \eqref{CH_eq} and the projected Allen-Cahn equation \eqref{ProAC}
then both preserve the mass, although the  gradient-descent trajectories and the transition paths are different\cite{Tiejun_1D}. 
One important fact is that \eqref{ProAC} and  \eqref{CH_eq} share   the same stationary points (metastable states) and the saddle points   if they have the same mass.
\cite{Tiejun_1D}  further compared the stochastic models 
arising from these two dynamics (\eqref{CH_eq} and \eqref{ProAC})  for the noise-induced transitions.
They showed the subtle difference in transition rates and minimum energy paths in the two stochastic models. 
For our purpose of locating the saddle point in this article,  we utilize the equivalence of saddle points of  \eqref{CH_eq} and \eqref{ProAC}  and solve 
   the saddle points of the Cahn-Hilliard equation \eqref{CH_eq}    
   by solving the projected Allen-Cahn equation \eqref{ProAC} (in $L^2$ metric). 
 
 Compared with the stationary points, people are more concerned about the saddle points for rare event study. 
  In \cite{convex_IMF}, the IMF has been  applied to locate the saddle point of an energy functional in $H^{-1}$ metric directly. However, as mentioned before, this directly is quite expensive in computation.   
 Considering the equivalence of the fixed points for CH equation and projected AC equation, we propose  to locate the saddle points  of the AC equation with the mass conservation constrain. 
 Recently, several methods have been developed to locate the saddle point with constrains.  
 \cite{LZ2009}    developed a constrained string method for finding the saddle points subject to constraints.   \cite{DuJCP2012} studied the constrained shrinking dimer dynamics  to locate saddle points associated with an energy functional defined on a constrained manifold. 
\cite{MAM_XZ} considered noise-induced transition paths in randomly perturbed dynamical systems on a smooth manifold.
 Besides,  the papers of  iterative minimization formulation (IMF)   \cite{IMF2014,IMA2015} have included the discussions on the projection idea for   saddle point on manifold. 
But in these works, the constraints are externally   imposed  and thus require higher computational cost than the unconstrained problems.
Our motivation  here is totally different. The question we considered here is essentially an unconstrained problem since the mass is conserved automatically in $H^{-1}$ metric.  We transform a difficult 
unconstrained problem into a less challenging constraint problem
and we only work on the orthogonal projection for mass conservation.  This method can reduce the computational cost efficiently since it can not only avoid a higher order equation solving but also escape from the $\Delta^{-1}$ operator calculation. Furthermore, we verify that the projected IMF can ensure the same convergence rate as the  original IMF. 
Finally, we remind the readers that if one really looks for the noise-induced transition paths in $H^{-1}$ sense, then the true dynamics like the CH equation \eqref{CH_eq} is still necessary, although our method for saddle points can assist this path-finding task; see details in \cite{Tiejun_1D}.

   The paper is organized as follows. Section \ref{Review} is a short review of two main methods for saddle points: the IMF and the GAD. In section \ref{Pro_GAD_IMF}, we first present the application of the IMF in the $H^{-1}$ metric,  and then
    propose the mathematical formulation of the projected IMF and the convergence result of the projected IMF. The projected GAD is also presented here.  In section \ref{Num_ex}, in order to validate the efficiency of our new   method, we test two numerical examples: the saddle points of the one dimensional Ginzburg-Landau free energy and the two dimensional Landau-Bravoskii free energy in $H^{-1}$ metric. Finally we make the conclusion.

\section{Review}\label{Review}

In this section, we will review  two main methods for saddle points: the  IMF and the GAD, from which the projected IMF and the projected GAD in the next Section will be proposed. 

\subsection{Iterative minimization formulation(IMF)}\label{Review_IMF}
 We first review the iteration minimization formulation (IMF) in \cite{IMF2014}.  Suppose $\mcM $ is  a function space equipped with the norm $\norm{\cdot}$
  and the inner product $\langle \cdot,\cdot \rangle$. The IMF to locate the saddle point of an energy functional $F(\phi)$ is the following iteration
\begin{numcases}{}
v^{(k+1)} = \argmin_{\| v \|=1} \inpd{\phi} {\Hess(\phi^{(k)})\phi},\label{IMF_direction}\\
\phi^{(k+1)} = \argmin_\phi L(\phi;\phi^{(k)},v^{(k+1)}),\label{IMF_position}
\end{numcases}
where $   \Hess = ~ \delta_\phi^2 F  $
is the second order variational operator of $F$, and
\begin{equation}\label{L_orig}
    \begin{split}
    L(\phi;\phi^{(k)}, v^{(k+1)}) = ~&(1-\alpha) F(\phi) + \alpha F\left( \phi- \inpd{v^{(k+1)}} { \phi-\phi^{(k)} }  v^{(k+1)} \right) \\
    ~ & - \beta F\left(\phi^{(k)} + \inpd{v^{(k+1)}} { \phi-\phi^{(k)} }  v^{(k+1)} \right).
    \end{split}
 \end{equation}
 $\alpha$ and $\beta$ are two parameters, and $\alpha+\beta>1$. Two special choices for $\alpha$ and $\beta$ are:
(i) $(\alpha, \beta) = (2,0),$ then $L(\phi;\phi^{(k)},v) = -F(\phi) + 2 F(\phi - \inpd {v} {\phi-\phi^{(k)} } v )$;
 (ii) $(\alpha, \beta) = (0,2),$ then $L(\phi;\phi^{(k)},v) = F(\phi) - 2 F(\phi^{(k)} + \inpd {v} {\phi-\phi^{(k)} } v )$.
  \eqref{IMF_direction} is called the ``rotation step" and \eqref{IMF_position} is the ``translation step".  The main properties of the auxiliary objective functional $L(\phi;\phi^{(k)},v)$ when $\alpha + \beta >1$ are listed
 here for reference.
 

 \begin{theorem}[\cite{IMF2014}]
 \label{Th_IMF}~Suppose that $\phi^*$ is a (non-degenerate) index-1 saddle point of the functional $F(\phi)$, 
and the auxiliary functional $L$ is defined by $(\ref{L_orig})$ with $\alpha + \beta >1$, then\\
 $(1)$ a neighbourhood  $\mcU$ of $\phi^*$ exists  such that for any $\phi \in \mcU$, $L(\phi;\phi^{(k)},v)$ is strictly convex in $\phi\in\mcU$ and thus has a unique minimum in $\mcU$;\\
 $(2)$ define the mapping $\Phi: \phi\in\mcU \rightarrow \Phi(\phi)\in\mcU$ to be the unique minimizer of $L$ in $\mcU$ for any $\phi\in\mcU$. Further assume that $\mcU$ contains no other stationary points of $F$ except for $\phi^*$. Then the mapping $\Phi$ has only one fixed point $\phi^*$;\\
 $(3)$ the mapping $\phi \to \Phi(\phi)$ has a quadratic convergence rate.
 \end{theorem}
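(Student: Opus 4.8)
The plan is to deduce all three parts from one computation: the first two variations of $L$ in its leading argument $\phi$, evaluated near the triple $(\phi^*,\phi^*,v^*)$, where by non-degeneracy $\mathbf{H}^*:=\Hess(\phi^*)$ has a simple lowest eigenvalue $\lambda_1<0$ with unit eigenvector $v^*$ and $\lambda_2:=\min\big(\sigma(\mathbf{H}^*)\setminus\{\lambda_1\}\big)>0$. The two inner shifts in $(\ref{L_orig})$ are rank-one: with $P_v:=\inpd{v}{\cdot}v$ and $Q_v:=I-P_v$ one has $\phi-\inpd{v}{\phi-\phi^{(k)}}v=Q_v\phi+P_v\phi^{(k)}$ and $\phi^{(k)}+\inpd{v}{\phi-\phi^{(k)}}v=P_v\phi+Q_v\phi^{(k)}$, so the last term of $L$ depends on $\phi$ only through $P_v\phi$ and the middle term only through $Q_v\phi$; all three computations rest on this splitting and on the fact that $\mathbf{H}^*$ commutes with $P_{v^*}$ and $Q_{v^*}$ because $v^*$ is one of its eigenvectors.

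For (1), differentiating $L$ twice in $\phi$ gives
\[ \delta_\phi^2 L[\xi,\xi]=(1-\alpha)\inpd{\xi}{\Hess(\phi)\xi}+\alpha\inpd{Q_v\xi}{\Hess(Q_v\phi+P_v\phi^{(k)})\,Q_v\xi}-\beta\inpd{P_v\xi}{\Hess(P_v\phi+Q_v\phi^{(k)})\,P_v\xi}. \]
At $\phi=\phi^{(k)}=\phi^*$, $v=v^*$ every Hessian equals $\mathbf{H}^*$, and writing $\xi=a v^*+w$ with $w\perp v^*$ this collapses to $\delta_\phi^2 L[\xi,\xi]=(1-\alpha-\beta)\lambda_1 a^2+\inpd{w}{\mathbf{H}^* w}$. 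Since $\alpha+\beta>1$ and $\lambda_1<0$ the first term is $\geq c_1 a^2$, and since $w$ lies in the positive spectral subspace, $\inpd{w}{\mathbf{H}^* w}\geq\lambda_2\norm{w}^2$, so $\delta_\phi^2 L[\xi,\xi]\geq c\norm{\xi}^2$ for some $c>0$. By continuity of $\Hess$ and of the min-mode map $\phi\mapsto v(\phi)$ (well-defined and smooth near $\phi^*$ thanks to the spectral gap at $\lambda_1$), the bound persists with constant $c/2$ for all $\phi,\phi^{(k)}$ in a sufficiently small ball $\mcU\ni\phi^*$ and $v=v(\phi^{(k)})$, so $L(\cdot\,;\phi^{(k)},v)$ is uniformly convex on $\mcU$, which yields uniqueness of the minimizer; existence of the minimizer in the interior of $\mcU$ then follows by combining uniform convexity with the smallness of $\nabla_\phi L(\phi^*;\phi^{(k)},v(\phi^{(k)}))$ as $\phi^{(k)}\to\phi^*$, verified in the next step.

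For (2), the first variation is
\[ \nabla_\phi L(\phi;\phi^{(k)},v)=(1-\alpha)\delta_\phi F(\phi)+\alpha\,Q_v\,\delta_\phi F(Q_v\phi+P_v\phi^{(k)})-\beta\,P_v\,\delta_\phi F(P_v\phi+Q_v\phi^{(k)}). \]
Setting $\phi=\phi^{(k)}=\phi^*$ makes every $\delta_\phi F$-term vanish, so $\nabla_\phi L(\phi^*;\phi^*,v)=0$ and by (1) $\phi^*=\Phi(\phi^*)$; this formula also shows $\nabla_\phi L(\phi^*;\phi^{(k)},v(\phi^{(k)}))$ is a $C^1$ function of $\phi^{(k)}$ vanishing at $\phi^{(k)}=\phi^*$, the fact used above. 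Conversely, if $\phi\in\mcU$ is a fixed point then the minimizer coincides with the base point, so $\phi=\phi^{(k)}$, both shifts collapse to $\phi$, and $\nabla_\phi L=0$ reads $Q_v\,\delta_\phi F(\phi)+(1-\alpha-\beta)P_v\,\delta_\phi F(\phi)=0$; as $P_v$ and $Q_v$ have orthogonal ranges and $1-\alpha-\beta\neq0$, this forces $\delta_\phi F(\phi)=0$, and the hypothesis that $\mcU$ contains no stationary point of $F$ other than $\phi^*$ gives $\phi=\phi^*$.

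For (3), $\Phi$ is characterized implicitly by $\mathcal{G}(\Phi(\phi),\phi):=\nabla_\phi L(\Phi(\phi);\phi,v(\phi))=0$. Its partial derivative in the first slot at $(\phi^*,\phi^*)$ is $\delta_\phi^2 L$ at $(\phi^*,\phi^*,v^*)$, boundedly invertible by (1); under the standing smoothness of $F$, $\mathcal{G}$ is $C^1$, so the implicit function theorem gives $\Phi\in C^1$ near $\phi^*$ with $D\Phi(\phi^*)=-[\delta_\phi^2 L|_{(\phi^*,\phi^*,v^*)}]^{-1}\,\partial_2\mathcal{G}(\phi^*,\phi^*)$. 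The crux is $\partial_2\mathcal{G}(\phi^*,\phi^*)=0$: differentiating $\nabla_\phi L$ in the $\phi^{(k)}$-slot at $\phi=\phi^{(k)}=\phi^*$, $v=v^*$ produces $\alpha\,Q_{v^*}\mathbf{H}^* P_{v^*}-\beta\,P_{v^*}\mathbf{H}^* Q_{v^*}$, which vanishes because $v^*$ is an eigenvector of $\mathbf{H}^*$; differentiating in the $v$-slot (which is then composed with $Dv(\phi^*)$) produces only terms carrying a factor $\delta_\phi F(\phi^*)=0$ or a factor $\partial_v(Q_v+P_v)=\partial_v I=0$, hence also vanishes. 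Therefore $D\Phi(\phi^*)=0$, and writing $\Phi(\phi)-\phi^*=\Phi(\phi)-\Phi(\phi^*)=\int_0^1 D\Phi(\phi^*+t(\phi-\phi^*))(\phi-\phi^*)\,dt$ together with a Lipschitz bound on $D\Phi$ near $\phi^*$ gives $\norm{\Phi(\phi)-\phi^*}\leq C\norm{\phi-\phi^*}^2$, the quadratic rate.

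The routine content is the two variational computations; the genuine obstacle is the cancellation in (3) — showing that every cross-derivative feeding into $D\Phi(\phi^*)$ drops out — which hinges on $v^*$ diagonalizing $\mathbf{H}^*$ and on the two shift-arguments collapsing to $\phi^*$ at the fixed point. A secondary technical burden is the existence (not merely uniqueness) of the interior minimizer in (1) and the $C^1$ dependence of the min-mode $v(\phi)$ on $\phi$, both of which rely on the non-degeneracy (spectral-gap) hypothesis.
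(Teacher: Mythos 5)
Your proposal is correct and takes essentially the same approach as the paper: the paper itself quotes this theorem from \cite{IMF2014} and only writes out the analogous projected case (Theorem \ref{Th_2nd_rate}), whose central step—writing the first-order optimality condition $\delta_\phi L(\Phi(\phi^{(k)});\phi^{(k)},v(\phi^{(k)}))=0$, differentiating it with respect to the base point $\phi^{(k)}$, and using $\delta_\phi F(\phi^*)=0$ together with the fact that $v^*$ is an eigenvector of $\Hess(\phi^*)$ to annihilate the cross terms and conclude $\Phi'(\phi^*)=0$—is exactly your part (3). Your $Q_v/P_v$ splitting and the resulting convexity bound $(1-\alpha-\beta)\lambda_1 a^2+\inpd{w}{\Hess(\phi^*)w}$ for parts (1)--(2) reproduce the argument of \cite{IMF2014} that the paper invokes without repeating, so apart from the regularity/spectral-gap caveats you already flag (which the paper also leaves implicit), there is nothing to add.
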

 

\subsection{Gentlest ascent dynamics(GAD)}\label{GAD}
The GAD for a gradient system $\dot{\phi} = -\delta_\phi F(\phi)$ is
\begin{subnumcases}{\label{GAD-g}}
  \dot{\phi} = -\delta_\phi F(\phi) + 2 \frac{\inpd{\delta_\phi F(\phi)}{ v}}{\inpd{v}{v}} v,\label{GAD-g-x}\\
\gamma \dot{v}(t)  =   - \delta_\phi^2 F(\phi)v + \inpd{v} {\delta_\phi^2 F(\phi)v}v, \label{GAD-g-v}
\end{subnumcases}
 where $\delta_\phi^2 F(\phi)$ is the second order variational derivative of the energy functional $F(\phi)$.  $\gamma > 0 $ is the relaxation parameter. A large $\gamma$ means
 a fast dynamics for the direction variable
 $v(t)$ towards the steady state.
 For a frozen $\phi$, this steady state is the min mode of $\delta_\phi^2 F(\phi)$:
  the eigenvector corresponds to the smallest eigenvalue of $\delta_\phi^2 F(\phi)$.
 
  \begin{theorem}[\cite{GAD2011}]

The (linearly) stable critical point of the GAD \eqref{GAD-g} corresponds to the index-1 saddle point of the original dynamics $\dot{\phi} = -\delta_\phi F(\phi)$, i.e.,

$(1)$  If $(\phi^*,v^*)$ is a stable critical point of the GAD, then $\phi^*$ is a saddle point of $F(\phi)$; 

 $(2)$  If $\phi^*$ is an index-1 saddle point of $F(\phi)$
 with the eigenvector $v^*$, then $(\phi^*,v^*)$ is a
  stable critical point of the GAD.
 
 \end{theorem}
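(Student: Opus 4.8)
The plan is to recast the statement as a linear stability analysis of the GAD \eqref{GAD-g}: first identify its critical points, then linearize there, and exploit a block-triangular structure of the linearized operator to read off its spectrum. For the critical points, setting $\dot\phi=0$ in \eqref{GAD-g-x} forces $\delta_\phi F(\phi^*)$ to be a scalar multiple of $v^*$; substituting that multiple back and matching the $v^*$-coefficient shows the multiple must vanish, so $\delta_\phi F(\phi^*)=0$. Then $\dot v=0$ in \eqref{GAD-g-v} gives $\delta_\phi^2 F(\phi^*)\,v^* = \lambda^* v^*$ with $\lambda^* = \inpd{v^*}{\delta_\phi^2 F(\phi^*)v^*}$, i.e. $v^*$ is a unit eigenvector of the Hessian $\Hess^* := \delta_\phi^2 F(\phi^*)$. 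Thus the critical points of the GAD are exactly the pairs consisting of a critical point of $F$ and a unit eigenvector of its Hessian there, and the theorem reduces to deciding which of these pairs are linearly stable.

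Next I would linearize. Writing $\phi=\phi^*+\xi$, $v=v^*+\eta$ and using $\delta_\phi F(\phi^*)=0$, every term produced by differentiating $v$ in the correction $2\,\inpd{\delta_\phi F(\phi)}{v}\,v/\inpd{v}{v}$ carries a factor $\delta_\phi F(\phi^*)$ and hence drops out, so the $\xi$-equation closes:
\begin{equation*}
\dot\xi = -\Hess^*\xi + 2\,\frac{\inpd{v^*}{\Hess^*\xi}}{\inpd{v^*}{v^*}}\,v^*.
\end{equation*}
By self-adjointness of $\Hess^*$ and $\Hess^* v^* = \lambda^* v^*$, I would split $\xi = \xi_\parallel + \xi_\perp$ along and orthogonal to $v^*$: the parallel part satisfies $\dot\xi_\parallel = \lambda^* \xi_\parallel$ (the doubled projected gradient reverses the flow along the mode $v^*$) and the orthogonal part satisfies $\dot\xi_\perp = -\Hess^*|_{(v^*)^\perp}\,\xi_\perp$. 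Linearizing \eqref{GAD-g-v}, and noting that $\norm{v}=1$ is an invariant manifold so that $\eta$ lies in $(v^*)^\perp$, gives $\gamma\dot\eta = -(\Hess^* - \lambda^* I)|_{(v^*)^\perp}\,\eta + (\text{a term linear in }\xi,\text{ involving }\delta_\phi^3 F)$. The essential point is that $\eta$ does not feed back into the $\xi$-equations, so the linearized operator is block triangular.

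Then I would read off the spectrum. By triangularity it is the union of $\{\lambda^*\}$, $\mathrm{spec}\big(-\Hess^*|_{(v^*)^\perp}\big)$ and $\gamma^{-1}\,\mathrm{spec}\big(-(\Hess^* - \lambda^* I)|_{(v^*)^\perp}\big)$. The critical point is linearly stable iff all three sets lie in the open left half-plane, i.e. iff $\lambda^* < 0$ and $\Hess^*$ is positive definite on $(v^*)^\perp$ — the third set is then automatically on the negative real axis since $\lambda^* < 0$. But ``$\lambda^* < 0$ together with all remaining eigenvalues of $\Hess^*$ positive'' is precisely the statement that $\phi^*$ is a non-degenerate index-1 saddle point of $F$ with unstable eigenvector $v^*$. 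Combining this with the stability criterion, a stable GAD critical point must be such a pair, which is part~(1) (an index-1 saddle is in particular a saddle point); conversely such a pair is a stable GAD critical point, which is part~(2).

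The main obstacle is the bookkeeping in the linearization: one must see clearly that $\delta_\phi F(\phi^*)=0$ eliminates the awkward $v$-dependent terms of the $\xi$-equation, and that the $\delta_\phi^3 F$ contribution enters only the slaved $\eta$-block and so cannot affect stability. The remaining difficulty is functional-analytic: $\Hess^*$ is an unbounded differential operator, so I would assume it self-adjoint with discrete spectrum bounded below (as holds for the Ginzburg--Landau type functionals considered here), so that ``index-1'' and the orthogonal splitting make sense and ``linearly stable'' may be read from the spectrum of the linearized generator; restricting to the invariant sphere $\norm{v}=1$, to discard the spurious radial perturbation of $v$, is a minor but necessary point.
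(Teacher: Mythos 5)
This theorem is not proved in the paper at all: it is quoted verbatim from the GAD reference \cite{GAD2011}, so there is no internal proof to compare against. Your argument — identifying the critical points of \eqref{GAD-g} as pairs (critical point of $F$, unit eigenvector of its Hessian), then exploiting that $\delta_\phi F(\phi^*)=0$ makes the linearized system block triangular with spectrum $\{\lambda^*\}\cup\mathrm{spec}(-\Hess^*|_{(v^*)^\perp})\cup\gamma^{-1}\mathrm{spec}(-(\Hess^*-\lambda^*\I)|_{(v^*)^\perp})$ — is correct and is essentially the standard linear stability analysis used in that cited work, with the functional-analytic hypotheses (self-adjoint Hessian with discrete spectrum, non-degeneracy, invariance of the unit sphere for $v$) stated appropriately.
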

 
 \begin{remark}
 In the IMF, there are two levels of iterations: the rotation step and the translation step.  In general, it requires many iteration steps to get $\phi^{(k+1)}$ for the translation step, but it is not necessary to do so in practice. If the two subproblems of the IMF moves forward only one iteration step,  the IMF becomes exactly the GAD in continuous time limit minimizations.

\end{remark}

\section{Main methods}\label{Pro_GAD_IMF}

We present the main methods of projection here
by starting with the formulation  
in the $H^{-1}$ space where the inner product  $\inpd{\cdot}{\cdot}$ becomes $\inpd{\cdot}{\cdot}_{H^{-1}}$.

\subsection{The IMF in $H^{-1}$ metric}\label{IMF_H-1}
Formally, the IMF in the spatially extended system to locate the saddle point of $F(\phi), \phi\in C(\Omega)$ in $H^{-1}$ metric is:
\begin{numcases}{}
v^{(k+1)} = \operatornamewithlimits{argmin}_{\|v\|_{H^{-1}}=1} \left\langle v, \widetilde\Hess(\phi^{(k)})v \right\rangle_{H^{-1}},\label{IMF_oo1}\\
\phi^{(k+1)} = \operatornamewithlimits{argmin} L(\phi;\phi^{(k)},v^{(k+1)}),\label{IMF_oo2}
\end{numcases}
where 
\begin{equation}\label{Hess_H1}
\wt\Hess=\delta_\phi^2 F(\phi)|_{H^{-1}} = -\Delta \delta_\phi^2 F(\phi) = -\Delta\Hess, 
\end{equation}
and
\begin{equation}\label{L_H_1}
    \begin{split}
    L(\phi;\phi^{(k)}, v^{(k+1)}) = ~&(1-\alpha) F(\phi) + \alpha F\left( \phi- \inpd{v^{(k+1)}} { \phi-\phi^{(k)} }_{H^{-1}}  v^{(k+1)} \right) \\
    ~ & - \beta F\left(\phi^{(k)} + \inpd{v^{(k+1)}} { \phi-\phi^{(k)} }_{H^{-1}}  v^{(k+1)} \right).
    \end{split}
 \end{equation}
Recall  $\Hess = \delta_\phi^2 F(\phi)$ is the second order variational operator of $F$ w.r.t. $\phi$ in $L^2$ metric. 
 For convenience, in this paper, we take 
$\alpha=0$, $\beta=2$, then 
$$L(\phi) = F(\phi)- 2 F(\hat\phi),$$ 
with 
\begin{equation}\label{hatphi_H1}
\hat\phi = \phi^{(k)} + \inpd{v^{(k+1)} }{(\phi-\phi^{(k)})}_{H^{-1}} v^{(k+1)},
\end{equation}
 where the inner product in $H^{-1}$ metric is defined by the $L^2$ product: $ \langle u,v\rangle_{H^{-1}} = \big\langle(-\Delta)^{-1}u,v \big\rangle_{L^2}$.

In $H^{-1}$ metric, $\phi$ is mass conserved, $\int_\Omega \phi(x) \, dx = m$. So any eigenvectors of $\wt\Hess$ satisfies $\int_\Omega \psi(x) \, dx = 0$. Thus the eigenvalue problem \eqref{IMF_oo1} can be rewritten as
\begin{equation}\label{eig-CH}
\begin{cases}
\wt{\Hess}(\phi)\psi  = \lambda \psi,  \\
   \int_\Omega \psi(x) \,dx = 0,
\end{cases}
\end{equation}
subject to some boundary condition. 
Define the Rayleigh quotient 
\[
\wt{\mathcal{R}} (\psi)= \frac{\inpd{\psi}{ \wt{\Hess}\psi}_{H^{-1}} }{\norm{\psi}^2_{H^{-1}}},
\]
and thus the min-mode is the minimizer of the problem
\begin{equation}\label{R-CH}
  \argmin_{\psi} \set{\wt{\mathcal{R}} (\psi):  {\int_\Omega \psi \, dx =0, ~~\norm{\psi}_{H^{-1}}=1} }.
\end{equation}
After the min-mode is obtained, the subproblem of minimizing the auxiliary functional \eqref{IMF_oo2} is then solved by evolving the gradient flow
\begin{equation}\label{H_1metric}
  \frac{\partial\phi}{\partial t} = \Delta\frac{\delta L}{\delta\phi}(\phi)
= \Delta \left(\frac{\delta F}{\delta\phi}(\phi) \right) + 2    \inpd{  \frac{\delta F}{\delta\hat\phi}(\hat{\phi} ) }{v}_{L^2} v,
 \end{equation}
 where $\hat\phi$ is defined in \eqref{hatphi_H1}.
By solving \eqref{R-CH} and \eqref{H_1metric}, one can get the saddle point of $F(\phi)$ in $H^{-1}$ metric.  The readers can refer to \cite{convex_IMF} for details.

For the IMF in the $H^{-1}$ metric, we can see in \eqref{hatphi_H1} that the $H^{-1}$ inner product calculation requires to get the $-\Delta^{-1} $ operator first. This can be transformed to a Poisson equation $\Delta w = -u$ and it takes large computational cost. This is why we consider the projected method in this note to locate the saddle point in $H^{-1}$ metric. In the next two subsections, we first present the projected IMF and then propose the projected GAD.

\subsection{The Projected IMF}
\label{Projected_IMF}
In this part, we propose the projected iterative minimization formulation to locate the saddle point of an energy functional $F(\phi)$ in the $H^{-1}$ metric.  Since the mass is preserved in $H^{-1}$ metric, we introduce the projection $\Pj$
\begin{equation}\label{Pj}
\Pj u:= u - \frac{1}{|\Omega|}\int_\Omega u(x)\, dx
\end{equation}
 onto the linear subspace
$H_0=\set{u \in L^2 : \int_\Omega u(x) dx =0}$.  One can show that $\Pj$ has the following properties:
\begin{enumerate}
\item $\Pj^2 = \Pj$;
\item $\Pj u \in H_0, \forall u \in L^2$;
\item $\Pj v = v, \forall v \in H_0$;
\item  $ \langle v, \Pj w \rangle_{L^2} = \langle \Pj v, w \rangle_{L^2},  \forall v \in L^2 $ and $\forall w \in H_0 $ . 
\end{enumerate}
In fact,
\begin{align*}
\Pj^2 u &= \Pj ( u - \frac{1}{|\Omega|}\int_\Omega u(x)\, dx) \\
       & = u - \frac{1}{|\Omega|}\int_\Omega u(x)\, dx - \frac{1}{|\Omega|}\int_\Omega (u(x) -\frac{1}{|\Omega|}\int_\Omega u(x)\, dx) \, dx \\
    &   = \Pj u, \quad \forall u.
\end{align*}
Besides,  one can easily show $\Pj u \in H_0$, since
\begin{equation}\label{int_Pu}
\int_\Omega \Pj u \, dx = \int_\Omega (u - \frac{1}{|\Omega|} \int_\Omega u(x) \, dx ) \, dx =0, \quad \forall u.
\end{equation}
The third property is  obvious. For the last one, when $ w \in H_0, v \in L^2$, we have
\begin{align*}
 \langle v, \Pj w \rangle_{L^2}  - \langle \Pj v, w \rangle_{L^2} = & \langle v, w \rangle_{L^2}  - \langle \Pj v, w \rangle_{L^2}  =   \langle v-\Pj v, w \rangle_{L^2}.
\end{align*}
Since $\Pj v \in H_0, v\in L^2$ and $\Pj $ is the projection from $L^2$ to $H_0$, 
we obtain $$ v - \Pj v \in H_0^{\perp}  \quad \Rightarrow   \quad  \langle v-\Pj v, w \rangle_{L^2} = 0, $$
that is,
$$   \langle v, \Pj w \rangle_{L^2}  = \langle \Pj v, w \rangle_{L^2} . $$

For the rotation step \eqref{IMF_oo1} in the IMF,  the following equivalence can be get
\begin{align*}
\left\langle v, \widetilde \Hess v \right\rangle_{H^{-1}} 
=  \left\langle v, -\Delta \Hess v \right\rangle_{H^{-1}} 
 =  \left\langle v, -(-\Delta)^{-1} \Delta \Hess  v \right\rangle_{L^2} 
 =  \left\langle v, \Hess v \right\rangle_{L^2}.
\end{align*}
When $v\in H_0$, by the last two properties of $\Pj$, we have 
$$ \left\langle v, \Hess v \right\rangle_{L^2} = \left\langle \Pj v,  \Hess \Pj v \right\rangle_{L^2} = \left\langle v, \Pj \Hess \Pj v \right\rangle_{L^2}, $$
so the eigenvector problem \eqref{IMF_oo1} can be equivalent transformed to
$$ v^{(k+1)} = \operatornamewithlimits{argmin} \left\langle v, \Pj \Hess  \Pj v \right\rangle_{L^2} $$
without regarding to the ``length" (norm) of $v$ since only the direction of $v$ matters in the translation step.

 Besides, due to the equivalence of saddle points of $F(\phi)$ in $H^{-1}$ metric (Cahn-Hilliard equation) and in $L^2$ metric with projection (projected Allen-Cahn equation),  all the terms in equation \eqref{IMF_oo1} and \eqref{IMF_oo2} including  the (first-order and second-order) variation, the inner-product and the norm in $H^{-1}$ metric can be changed to the $L^2$ metric onto the confined subspace $H_0$. It can be verified that the relation of the variations in $L^2$ space  and its subspace $H_0$ is
$$ \mu_1 = \Pj \mu_2,  \quad  \widehat\Hess = \Pj \Hess \Pj, $$
where $\mu_1$ and $\mu_2 $ are the first-order variations of $F(\phi)$ in $H_0$ and $L^2$, respectively.  $\widehat\Hess $ and $\Hess$ are the second-order variations of $F(\phi)$ in $H_0$ and $L^2$, respectively.

So the projected IMF written in terms of $\inpd{ \cdot }{\cdot }_{L^2}$  is 
 \begin{numcases}{}
v^{(k+1)} = \operatornamewithlimits{argmin}_{\|v\|_{L^2}=1} \left\langle v, \wh\Hess(\phi^{(k)})v \right\rangle_{L^2},\label{IMF_pro1}\\
\phi^{(k+1)} = \operatornamewithlimits{argmin}_{\int_\Omega \phi(x) dx=m} L(\phi;\phi^{(k)},v^{(k+1)}),\label{IMF_pro2}
\end{numcases}
where $\wh\Hess=\Pj \Hess \Pj, \Hess = \delta^2_\phi F(\phi)$.  $L(\phi)
=F(\phi)- 2 F(\hat\phi)$, with 
\begin{equation}\label{phi_hat}
\hat\phi = \phi^{(k)} + \inpd{v^{(k+1)} }{(\phi-\phi^{(k)})}_{L^2} v^{(k+1)}.
\end{equation}
The eigenvector problem \eqref{IMF_pro1} is equivalent to 
 \begin{equation}\label{eig-Proj-AC}
\begin{cases}
\Pj \Hess \Pj \psi  = \lambda \psi,  \\
   \int_\Omega \psi(x) \,dx = 0,
\end{cases}
\end{equation}
subject to some boundary condition. In this paper, we consider the periodic boundary condition only. 
The Rayleigh quotient in this case is 
\[
\wh{\mathcal{R}} (\psi)= \frac{\inpd{\psi}{ \Pj \Hess \Pj \psi }_{L^2} }{\norm{\psi}^2_{L^2}},
\]
and thus the min-mode of $\wh\Hess$ is the minimizer of the problem
\begin{equation}\label{R-Proj-AC}
  \argmin_{\psi} \set{\wh{\mathcal{R}} (\psi):  {\int_\Omega \psi \, dx =0, ~~\norm{\psi}_{L^2}=1} }.
\end{equation}
After the min-mode is obtained, the subproblem of minimizing the auxiliary functional \eqref{IMF_pro2} is then solved by evolving the gradient flow
\begin{equation}\label{dynamics_pro}
\frac{\partial\phi}{\partial t} = - \Pj \frac{\delta L}{\delta \phi}(\phi)
= -\Pj \frac{\delta F}{\delta \phi}(\phi)
+2  \inpd{v}{ \frac{\delta F}{\delta \hat\phi}(\hat\phi)}_{L^2} \Pj v,
\end{equation}
where
 $$\hat\phi = \phi^{(k)} + \inpd{v^{(k+1)} }{(\phi-\phi^{(k)})}_{L^2} v^{(k+1)}. $$  
We can show that \eqref{dynamics_pro} ensures mass conservation automatically
\[
\frac{\partial}{\partial t} \int_\Omega  \phi(x) \, dx = -\int_\Omega \Pj \frac{\delta L}{\delta\phi} \, dx = 0,
\]
thanks to \eqref{int_Pu}.
Thus, by solving \eqref{R-Proj-AC} and \eqref{dynamics_pro}, we can get the saddle point of $F(\phi)$ in $H^{-1}$ metric.  It is easy to find that \eqref{dynamics_pro} is two order lower in spatial derivative than \eqref{H_1metric}, which is the gradient flow in $H^{-1}$ metric directly. Besides, the inner product in \eqref{dynamics_pro} is in $L^2$ metric which avoids the  $\Delta^{-1}$ operator calculation. We can also apply the convex splitting method to \eqref{dynamics_pro} to construct a large time step size scheme as in \cite{convex_IMF}.

Denote this mapping for the iteration as $\Phi(\phi)$, we shall show that the Jacobian matrix of $\Phi(\phi)$ in the projection sense vanishes at the index-1 saddle point. This implies that the projected IMF is of quadratic convergence rate.

\subsection{Convergence Results}

 \begin{theorem}\label{Th_2nd_rate}
~Suppose that $\phi^*$ is a (non-degenerate) index-1 saddle point of the functional $F(\phi)$, which satisfies that the second order variational derivative $\delta^2_\phi F(\phi)$ is continuous. For each $\phi$, $v(\phi)$ is the normalized  eigenvector corresponding to the smallest eigenvalue of the matrix $\wh\Hess = \Pj \Hess \Pj, \Hess = \delta^2_\phi F $, i.e., 
 $$ v(\phi) = \argmin_{\|u\|=1} u^T  \wh\Hess(\phi)  u.$$ Take $\alpha = 0, \beta = 2,$ and the auxiliary functional is $L(\phi) = F(\phi) - 2F(\hat\phi)$, then\\
 $(1)$ $\phi^*$ is local minimizer of $L(\phi; \phi^*,v)$;\\
 $(2)$ a neighbourhood  $\mcU$ of $\phi^*$ exists  such that for any $\phi \in \mcU$, $L(\phi;\phi^{(k)},v)$ is strictly convex in $\phi\in\mcU$ and thus has a unique minimum in $\mcU$;\\
 $(3)$ define the mapping $\Phi: \phi\in\mcU \rightarrow \Phi(\phi)\in\mcU$ to be the unique minimizer of $L$ in $\mcU$ for any $\phi\in\mcU$. Further assume that $\mcU$ contains no other stationary points of $F$ except for $\phi^*$. Then the mapping $\Phi$ has only one fixed point $\phi^*$;\\
 $(4)$ 
 $\Phi(\phi)$ is differentiable in $\mcU$ and $\Pj \Phi^\prime(\phi^*) \Pj = 0$. Thus the mapping $\phi \to \Phi(\phi)$ has a local quadratic convergence rate. 
 \end{theorem}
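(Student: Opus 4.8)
The organizing idea is that, since the mass $\int_\Omega\phi\,dx=m$ is conserved, the projected IMF \eqref{IMF_pro1}--\eqref{IMF_pro2} is exactly the IMF of \cite{IMF2014} run on the affine hyperplane $\mathcal A_m=\set{\phi:\int_\Omega\phi\,dx=m}$, whose tangent space is $H_0$; on $\mathcal A_m$ the $L^2$-gradient of $F$ is $\Pj\delta_\phi F$ and the $L^2$-Hessian is $\wh\Hess=\Pj\Hess\Pj$, by properties (3)--(4) of $\Pj$. Thus I would re-run the proof of Theorem~\ref{Th_IMF} with the ambient space replaced by $H_0$ and $\Hess$ replaced by $\wh\Hess$, checking only the three places where $\Pj$ interacts with the rank-one operator $P_v u:=\inpd{v}{u}v$. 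The useful identities are $\Pj P_v=P_v\Pj=P_v$ whenever $v\in H_0$ (since $\Pj v=v$), and the fact that an "index-1 saddle in the $H^{-1}$ sense" means $\Pj\delta_\phi F(\phi^*)=0$ together with $\wh\Hess(\phi^*)|_{H_0}$ having exactly one negative eigenvalue $\lambda_1<0$ (eigenvector $v^*=v(\phi^*)$) and all others positive.

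For (1), I would compute the first and second variations of $L(\cdot\,;\phi^*,v^*)$ at $\phi^*$, where $\hat\phi=\phi^*$. Since $\Pj\delta_\phi F(\phi^*)=0$, the function $\delta_\phi F(\phi^*)$ is a constant, hence annihilated by $P_{v^*}$ and by every $H_0$-inner product (because $v^*\in H_0$); this yields $\Pj\delta_\phi L(\phi^*)=0$ and, using that $\hat\phi$ is affine in $\phi$, $\Pj\,\delta_\phi^2 L(\phi^*)\,\Pj=\wh\Hess(\phi^*)-2P_{v^*}\wh\Hess(\phi^*)P_{v^*}$ on $H_0$ --- the same operator as in \cite{IMF2014} but with $\wh\Hess$. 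By the spectral decomposition of $\wh\Hess(\phi^*)$ on $H_0$ this operator equals $-\lambda_1>0$ on $\mathrm{span}(v^*)$ and equals $\wh\Hess(\phi^*)>0$ on $v^{*\perp}\cap H_0$, so it is positive definite, proving (1). Part (2) then follows from continuity: $\delta_\phi^2 F$ is continuous, the spectral gap $\lambda_1<\lambda_2$ makes $\phi\mapsto v(\phi)$ continuous near $\phi^*$, so $\Pj\,\delta_\phi^2 L(\phi;\phi^{(k)},v(\phi^{(k)}))\,\Pj$ stays positive definite on $H_0$ for $\phi,\phi^{(k)}$ near $\phi^*$, giving strict convexity and a unique minimizer $\Phi(\phi)$ in $\mcU$. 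For (3), at a fixed point $\phi=\Phi(\phi)$ the first-order condition is $\Pj\delta_\phi L(\phi;\phi,v(\phi))=0$; since $\hat\phi=\phi$ there, this reads $g-2P_{v(\phi)}g=0$ with $g:=\Pj\delta_\phi F(\phi)\in H_0$; applying $P_{v(\phi)}$ gives $P_{v(\phi)}g=0$ and then $g=0$, so $\phi$ is a mass-constrained stationary point of $F$, hence $\phi=\phi^*$ by the hypothesis on $\mcU$.

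The substantive step is (4). I would apply the implicit function theorem to $G(\psi,\phi):=\Pj\,\delta_\psi L(\psi;\phi,v(\phi))=\Pj\delta_\phi F(\psi)-2P_{v(\phi)}\delta_\phi F(\hat\phi)$ with $\hat\phi=\phi+P_{v(\phi)}(\psi-\phi)$. Since $\partial_\psi G|_{(\phi^*,\phi^*)}$ is the positive-definite operator of part (1), $\Phi$ is differentiable near $\phi^*$ with $\Phi'(\phi^*)=-(\partial_\psi G)^{-1}\partial_\phi G$ at $(\phi^*,\phi^*)$, so it suffices to show $\partial_\phi G|_{(\phi^*,\phi^*)}$ vanishes on $H_0$. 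Differentiating along $h\in H_0$ and using $\psi=\phi^*$, $\hat\phi|_{\phi^*}=\phi^*$, and $\delta_\phi F(\phi^*)=\mathrm{const}$, one finds: (i) every term carrying $v'(\phi^*)h$ contains a factor $\inpd{v'(\phi^*)h}{\mathrm{const}}$, $\inpd{v^*}{\mathrm{const}}$, or a rank-one operator applied to $(\psi-\phi)|_{\phi^*}=0$, hence drops out, because $v'(\phi^*)h\in H_0$ and $v^*\in H_0$; (ii) the one remaining term is $-2P_{v^*}\Hess(\phi^*)(I-P_{v^*})h$, arising from $\partial_\phi\hat\phi|_{\phi^*}=I-P_{v^*}$. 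The final cancellation is $P_{v^*}\Hess(\phi^*)(I-P_{v^*})h=0$ for $h\in H_0$: indeed $(I-P_{v^*})h\in H_0$ and $\perp v^*$, and since $\Hess(\phi^*)$ is $L^2$-self-adjoint with $\Pj\Hess(\phi^*)v^*=\lambda_1 v^*$ (i.e. $\Hess(\phi^*)v^*=\lambda_1 v^*+\mathrm{const}$), one gets $\inpd{v^*}{\Hess(\phi^*)(I-P_{v^*})h}=\inpd{\Hess(\phi^*)v^*}{(I-P_{v^*})h}=0$. Hence $\partial_\phi G|_{(\phi^*,\phi^*)}=0$ on $H_0$, i.e. $\Pj\Phi'(\phi^*)\Pj=0$; a Taylor expansion of $\Phi$ at its fixed point (as in \cite{IMF2014}) then gives the local quadratic rate $\norm{\phi^{(k+1)}-\phi^*}\le C\norm{\phi^{(k)}-\phi^*}^2$.

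I expect the main obstacle to be the careful bookkeeping of how the \emph{unprojected} operator $\Hess$ (forced on us because $F$ and its variations naturally live in $L^2$) interacts with $\Pj$ and with the rank-one maps $P_{v(\phi)}$ --- in particular tracking all "constant-mode" pieces of $\delta_\phi F(\phi^*)$ and of $\Hess(\phi^*)v^*$ and showing each is annihilated --- together with the infinite-dimensional technicalities: the $C^1$ dependence of $\phi\mapsto v(\phi)$ and the validity of the implicit function theorem, which rely on the non-degeneracy (spectral gap of $\wh\Hess$ on $H_0$) and on sufficient regularity of $\delta_\phi^2 F$.
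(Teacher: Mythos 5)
Your proposal is correct and follows essentially the same route as the paper: parts (1)--(3) are obtained by rerunning the IMF arguments of \cite{IMF2014} on $H_0$ with $\wh\Hess=\Pj\Hess\Pj$, and part (4) comes from differentiating the projected first-order condition $\Pj\delta_\phi L(\Phi(\phi);\phi,v(\phi))=0$ at $\phi^*$, using $\Pj\delta_\phi F(\phi^*)=0$ and $\Pj\Hess\Pj v^*=\lambda_1 v^*$ to cancel all terms not containing $\Phi'$, and then the invertibility of $\Pj\Hess(\phi^*)\Pj-2\lambda_1 v^*{v^*}^{\transpose}$ on $H_0$ to conclude $\Pj\Phi'(\phi^*)\Pj=0$. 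Your implicit-function-theorem packaging (showing $\partial_\phi G=0$ separately) is only an organizational variant of the paper's computation, with the minor added benefit of justifying the differentiability of $\Phi$ and of treating $\delta_\phi F(\phi^*)$ as a constant (rather than zero) more carefully than the paper does.
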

 \begin{proof}
 The proof of the first three conclusions can be generalized from the finite space to the infinite space based on the proof of Theorem 3.1 in \cite{IMF2014} without difficuty. The main difference is the proof of the quadratic convergence rate. Here, we only give the details of the final conclusion.

In fact, the first order variational derivative of $L(\phi; \phi^{(k)},v(\phi))$ can be calculated as
$$ \delta_\phi L(\phi; \phi^{(k)},v(\phi)) = \delta_\phi F(\phi) - 2 \inpd{v}{\delta_{\hat\phi}F(\hat\phi)}_{L^2} v. $$ 
At each $\phi^{(k)} \in \mcU$, the mapping $\Phi(\phi^{(k)})$ satisfies the first order equation 
$\Pj \delta_\phi L(\Phi(\phi^{(k)}),\phi^{(k)},v(\phi^{(k)})) = 0$, that is, 
\begin{equation}\label{PgradL_0}
\Pj \delta_\phi F(\Phi(\phi^{(k)})) - 2 \inpd{v(\phi^{(k)})}{\delta_{\hat\phi}F(\hat\phi)}_{L^2} \Pj v(\phi^{(k)}) = 0.
\end{equation}
Take derivative w.r.t. $\phi^{(k)}$ on both sides of \eqref{PgradL_0}, we get
\begin{align}\label{grad_PgradL_0}
& \Pj^2 \Hess(\Phi(\phi^{(k)})) \Pj^2 \Phi^\prime(\phi^{(k)}) \Pj - 2 \inpd{v(\phi^{(k)})}{\delta_{\hat\phi}F(\hat\phi)}_{L^2} \Pj^2 J(\phi^{(k)}) \Pj \nonumber\\
& -  2\inpd{\Pj J(\phi^{(k)}) \Pj }{\delta_{\hat\phi}F(\hat\phi)}_{L^2} \Pj v(\phi^{(k)}) \nonumber\\
& - 2\inpd{v(\phi^{(k)})}{\Pj \Hess(\hat\phi)\Pj \hat\phi^\prime }_{L^2} \Pj v(\phi^{(k)}) = 0,
\end{align}
where $\Hess = \delta_\phi^2 F, J(\phi^{(k)}) = \frac{\partial v(\phi^{(k)})}{\partial\phi^{(k)}}$ and
\begin{align*}
\hat\phi^\prime(\phi^{(k)}) = \Pj & + \inpd{v(\phi^{(k)})}{\Phi(\phi^{(k)}) - \phi^{(k)}}_{L^2} \Pj J(\phi^{(k)}) \Pj \\
& + \inpd{\Pj J(\phi^{(k)})\Pj}{\Phi(\phi^{(k)})-\phi^{(k)}}_{L^2} v(\phi^{(k)}) \\
& + \inpd{v(\phi^{(k)})}{\Pj \Phi^\prime(\phi^{(k)}) \Pj - \Pj }_{L^2} v(\phi^{(k)}).
\end{align*}
Let $\phi^{(k)} = \phi^*$ be the saddle point, we have $\Phi(\phi^*) = \phi^*, \hat\phi = \phi^*, \delta_{\hat\phi}F(\phi^*) = 0$ and $\hat\phi^\prime = \Pj + \inpd{v(\phi^{(k)})}{\Pj \Phi^\prime(\phi^*) \Pj - \Pj }_{L^2} v(\phi^*),$ thus \eqref{grad_PgradL_0} becomes
\begin{align*}
& \Pj \Hess(\phi^*) \Pj^2 \Phi^\prime(\phi^*) \Pj \nonumber \\
 = & ~ 2\inpd{v(\phi^*)}{\Pj \Hess(\phi^*)\Pj  [\Pj + \inpd{v(\phi^*)}{\Pj \Phi^\prime(\phi^*) \Pj - \Pj }_{L^2} v(\phi^*)] }_{L^2} \Pj v(\phi^*),
\end{align*}
which can be simplified as
\begin{align}\label{final_eq}
( \Pj \Hess(\phi^*) \Pj - 2\lambda vv^T) \Pj \Phi^\prime(\phi^*) \Pj = 0,
\end{align}
by denoting $u^Tv = \inpd{u}{v}$, applying $\Pj \Hess \Pj v = \lambda v$ and $\Pj \Hess \Pj ( \I - vv^T) \Pj = 0$.
\eqref{final_eq} implies that
$$  \Pj \Phi^\prime(\phi^*) \Pj = 0. $$
One can carry out the second order derivative of $\Phi(\phi)$ at $\phi^*$ further and observe that  $  \Pj \Phi^{\prime\prime}(\phi^*) \Pj = 0$ does not trivially hold. Thus the iteration $\phi \to \Phi(\phi) $ locally converges to $\phi^*$ with the quadratic rate.
 \end{proof}

\begin{remark}
  Theorem \ref{Th_2nd_rate} is also applicable for any auxiliary functional $L$ only if $\alpha+\beta >1$. Here we take $\alpha = 0, \beta =2 $ just for convenience. $\alpha$ and $\beta$ are  defined in Section \ref{Review_IMF}.
\end{remark}

\begin{remark}
We are dealing with a linear constraint here, so the projection $\Pj$ is the standard orthogonal
projection. For general nonlinear constraints giving rise to a submanifold, the projection should 
follow the  geodesic distance on the submanifold exactly to ensure the quadratic convergence rate in IMF \cite{IMA2015}.
\end{remark}
 
\subsection{Projected GAD}
We now present the projected GAD to calculate the saddle point of the energy functional $F(\phi)$ in $H^{-1}$ metric. 
For comparison, we put the original GAD \eqref{GAD-g}  in Section \ref{GAD} here:

 \begin{numcases}{}
 \frac{\partial \phi}{\partial t} = -\delta_\phi F(\phi) + 2 \frac{\inpd{\delta_\phi F(\phi)}{ v}_{L^2} }{\inpd{v}{v}_{L^2}} v,\label{GAD-ori-x}\\
\gamma \frac{\partial v}{\partial t}  =  - \delta_\phi^2 F(\phi)v + \inpd{v} {\delta^2_\phi F(\phi)v}_{L^2} v. \label{GAD-ori-v}
  \end{numcases}
  
By using the projection $\Pj$ in \eqref{Pj}, the projected GAD is given as follows:

 \begin{numcases}{}
 \frac{\partial \phi}{\partial t} = -\Pj\delta_\phi F(\phi) + 2 \frac{\inpd{\delta_\phi F(\phi)}{ v}_{L^2} }{\inpd{v}{v}_{L^2}} \Pj v,\label{GAD-Pj-x}\\
\gamma \frac{\partial v}{\partial t}  =  - \Pj \delta_\phi^2 F(\phi) \Pj v + \inpd{v} {\Pj \delta^2_\phi F(\phi) \Pj v}_{L^2} v. \label{GAD-Pj-v}
 \end{numcases}
By integrating w.r.t. $x$ on both sides of \eqref{GAD-Pj-v}, we get
 \begin{align*}
 \gamma \frac{\partial}{\partial t} \int_\Omega v \, dx & = - \int_\Omega \Pj \delta_\phi^2 F(\phi) \Pj v \, dx + \inpd{v} {\Pj \delta^2_\phi F(\phi) \Pj v}_{L^2} \int_\Omega v \, dx \\
 & = \inpd{v} {\Pj \delta^2_\phi F(\phi) \Pj v}_{L^2} \int_\Omega v \, dx, 
 \end{align*}
 this is an ordinary differential equation of $\int_\Omega v \, dx$. Considering the initial condition, $\int_\Omega v_0 \, dx = 0$, one can easily get 
 \[ \int_\Omega v(x) \, dx = 0, \quad \forall v, \]
 and thus
 \[ \Pj v = v - \int_\Omega v(x) \,dx = v, \quad \forall v. \]
  Furthermore, by using the last property of $\Pj$,
  $$ \langle v, \Pj w \rangle_{L^2} = \langle \Pj v, w \rangle_{L^2},  \forall v \in L^2, \forall w \in H_0  .$$ 
   the projected GAD can be rewritten as
%

 \begin{numcases}{}
\frac{\partial \phi}{\partial t} = -\Pj\delta_\phi F(\phi) + 2 \frac{\inpd{\delta_\phi F(\phi)}{ v}_{L^2} }{\inpd{v}{v}_{L^2}} \Pj v,\label{GAD-Pj1-x}\\
\gamma \frac{\partial v}{\partial t} =  - \Pj \delta_\phi^2 F(\phi) \Pj v + \inpd{v} { \delta^2_\phi F(\phi) v}_{L^2} v. \label{GAD-Pj1-v}
\end{numcases}
By solving the equation \eqref{GAD-Pj1-x} and \eqref{GAD-Pj1-v}, we can calculate the saddle points of $F(\phi)$ in $H^{-1}$ metric.

\section{Numerical example}\label{Num_ex}

In this section, we will illustrate the above projection method by locating the transition state of the one dimensional Ginzburg-Landau free energy and the two dimensional Landau-Brazovskii free energy in the $H^{-1}$ metric.

\subsection{1D example: Ginzburg-Landau free energy}
Consider the one dimensional Ginzburg-Landau free energy on $[0,1]$,
\begin{equation}
 \label{eqn:F_GL}
F(\phi) = \int_0^1 \Big[ \frac{\kappa^2}{2}(\frac{\partial \phi}{\partial x})^2 + f(\phi) \Big]\,dx,
\end{equation}
 where $\phi(x)$ is an order parameter and $\kappa>0$. $f(\phi) = (\phi^2-1)^2/4$.
The first and the second order variation of $F(\phi)$ can be calculated as
\begin{align*}
\frac{\delta F}{\delta\phi}(\phi) & = -\kappa^2 \Delta \phi + \phi^3-\phi,  \\
\frac{\delta^2 F}{\delta\phi^2}(\phi) & = -\kappa^2 \Delta + 3\phi^2-1:= \Hess.
\end{align*}
 So the projected IMF is
  \begin{numcases}{}
v^{(k+1)} = \operatornamewithlimits{argmin}_{\|v\|=1} \left\langle v, \Pj\Hess\Pj(\phi^{(k)})v \right\rangle_{L^2},\label{IMF_ex1}\\
\phi^{(k+1)} = \operatornamewithlimits{argmin}_{\int_\Omega \phi(x) dx=m} L(\phi;\phi^{(k)},v^{(k+1)}),
\label{IMF_ex2}
\end{numcases}
with $L(\phi) = F(\phi) - F(\hat\phi)$, $\hat\phi$ is defined in \eqref{phi_hat}.
The second minimization  sub-problem \eqref{IMF_ex2} is solved by evolving the gradient flow:
$$ \frac{\partial\phi}{\partial t}  =  - \Pj \delta_\phi L(\phi),
 $$
 where
$$  - \Pj \delta_\phi L(\phi) = \! -\Pj \big[\!\!-\kappa^2 \Delta \phi + (\phi^3-\phi) \big] \! + \! 2  \inpd{v}{ -\kappa^2 \Delta \hat\phi + (\hat\phi^3-\hat\phi) }_{L^2} \Pj v, $$
here $\phi = \phi^{(k+1)}$. And the projected GAD is


  \begin{numcases}{}
\frac{\partial \phi}{\partial t} = -\Pj (-\kappa^2 \Delta \phi + (\phi^3-\phi)) + 2 \frac{\inpd{v}{-\kappa^2 \Delta \phi + (\phi^3-\phi)}_{L^2} }{\inpd{v}{v}_{L^2}} \Pj v,\label{GAD-exPj1-x}\\
\gamma \frac{\partial v}{\partial t}  =  \! - \! \Pj (-\kappa^2 \Delta \! + \! (3\phi^2-1)) \Pj v + \inpd{v} { -\kappa^2 \Delta v \! + \! (3\phi^2-1)v  }_{L^2} v.
\end{numcases}
We apply the finite difference scheme to achieve the numerical example.
  For the projected IMF, we further construct the following convex splitting scheme \eqref{Proj_AC_convex_scheme}  to discrete \eqref{IMF_ex2} in time.
  
   \begin{equation}\label{Proj_AC_convex_scheme}
     \begin{split}
         \frac{\phi^{n+1}-\phi^n}{\Delta t}  =  & \Pj \left[ \kappa^2 \Delta\phi - 2\phi - 2 \inpd{v}{\phi}v \right]^{n+1} \\
         & \!+\! \Pj \left[ -\phi^3 + 3\phi + 2\inpd{v}{ -\kappa^2\Delta\hat\phi + {\hat\phi}^3 } v \right]^n.
     \end{split}
 \end{equation}
 
   In the numerical test, we take $\kappa=0.04$, the initial mass $m=0.6$, and the mesh grid is $\{x_i = i h , i=0, 1, 2,\ldots, N\}. ~h = 1/N.$ $N=100$, $\Delta t = 0.1$. We use the periodic boundary condition in this example. We find that the saddle point of $F(\phi)$ in the $H^{-1}$ metric calculated by the projected IMF or the projected GAD is exactly the same as the result in \cite{convex_IMF} which applies the IMF in the $H^{-1}$ metric directly, see Figure \ref{figA:ProjIMF_transition states}. Besides, the quadratic convergence rate can also be observed when using the projected IMF; see Figure \ref{figB:ProjIMF_convergence_rate} for the convergence result. 
  In order to illustrate the advantage of this method, we make comparison of the CPU time required for the same iteration number  
   between the projected IMF  in $L^2$ metric and the  original IMF in $H^{-1}$ metric.  Table \ref{1D_case} shows the results for various initial states $\phi_{01}, \phi_{02} $  and $\phi_{03}$. One can find that the projected IMF in $L^2$ metric can save almost half computational cost compared with the IMF in $H^{-1}$ metric, especially for the large inner iteration number.

\begin{figure}[htbp]
\begin{center}
\begin{subfigure}[b]{0.45\textwidth}
\includegraphics[width=\textwidth]{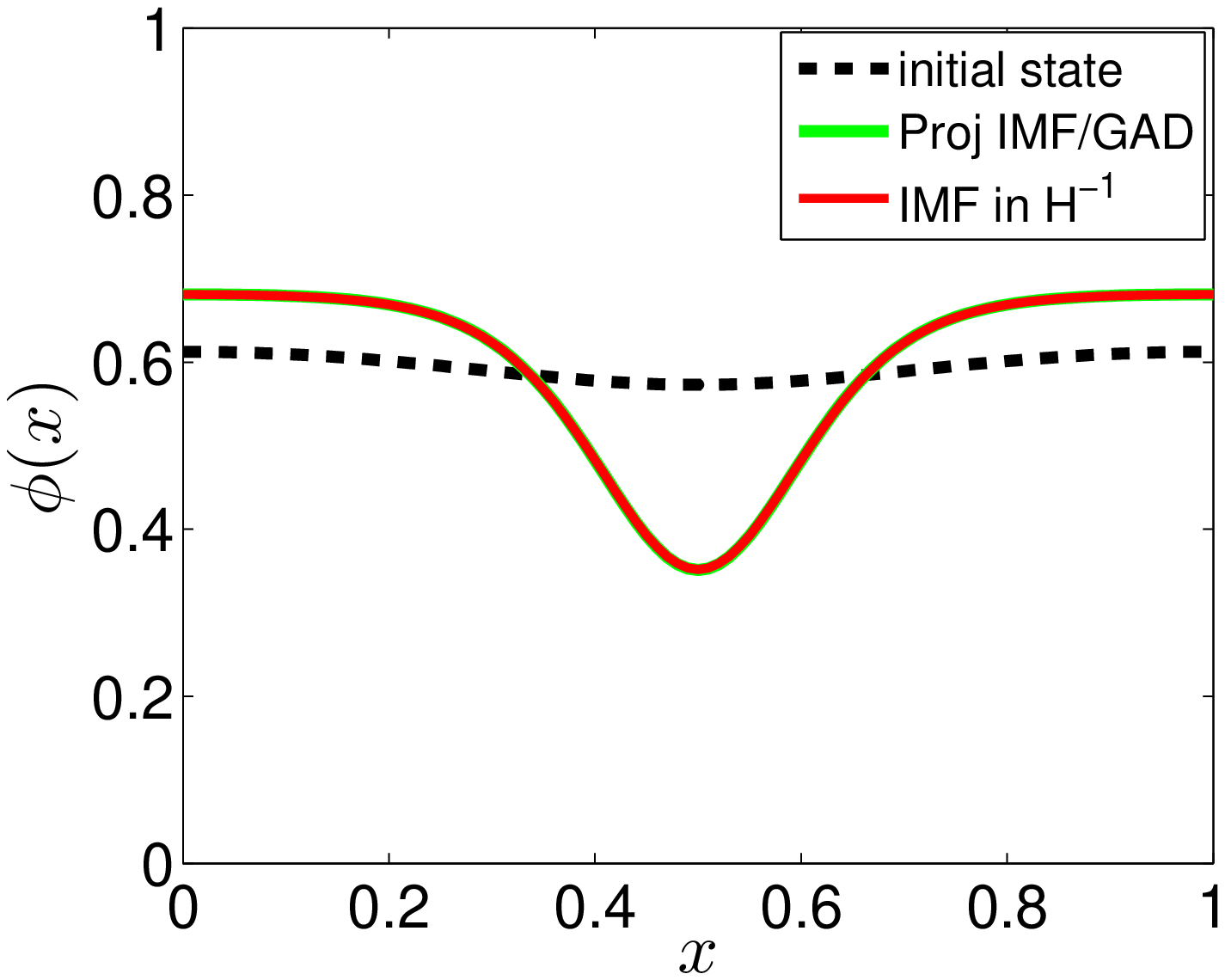}
\caption{transition states}
\label{figA:ProjIMF_transition states}
\end{subfigure}
\begin{subfigure}[b]{0.45\textwidth}
\includegraphics[width=\textwidth]{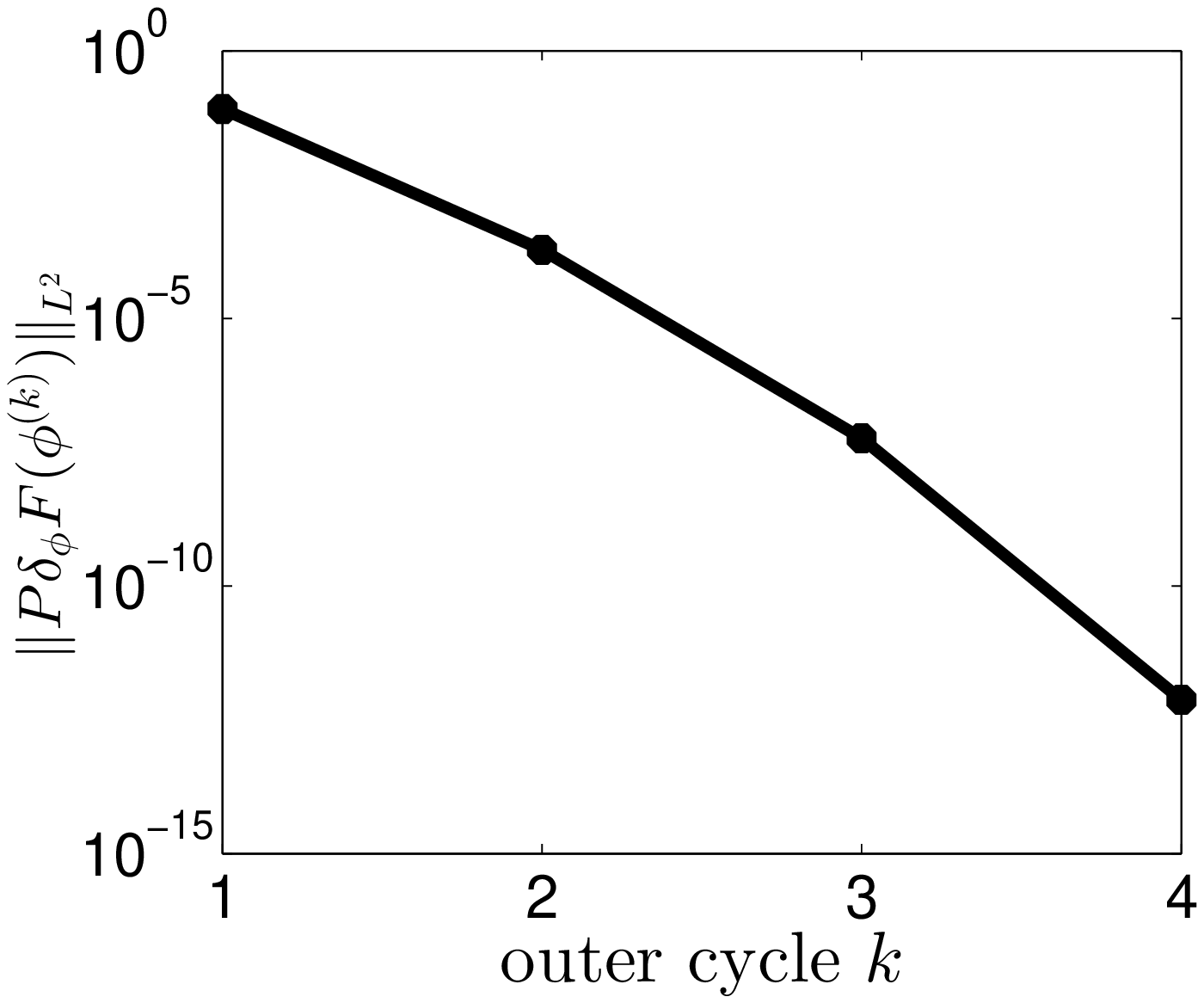}
\caption{convergence rate}
\label{figB:ProjIMF_convergence_rate}
\end{subfigure}
\caption{~  (A): initial state (dashed line); transition state by projected IMF or projected GAD (green line); transition state by IMF in $H^{-1}$ metric (red line).
 (B): The decay of the error $\| \Pj \delta_\phi F(\phi^{(k)})\|_{L^2}$ measured by the $L^2$ norm of the projected force at each cycle $k$.}
\label{fig:ProjIMF_results}
\end{center}
\end{figure}

\begin{table}[htbp]
\centering
\begin{tabular}{| c | c | c | c | c | c | c | c | c |  }
\hline
\multicolumn{2}{|c|}
{iterN}  & $1e4$ & $2e4$ & $5e4$ & $1e5$  & $2e5$ \\
\hline
\multirow{2}*{$\phi_{01}$}  & 
IMF & 16.11 & 32.06 & 80.52  &  158.75  &  320.99 \\
\cline{2-7} 
& Projected IMF  & 9.14  & 18.14 & 45.94  &  91.07 &  182.68 \\
\hline
\hline
\multirow{2}*{$\phi_{02}$}  & 
IMF  & 15.73 & 32.42 & 80.55  &  158.87 &  316.28 \\
\cline{2-7} 
& Projected IMF & 9.30  & 18.26 &  45.27  &  90.75  &  179.99   \\
\hline
\hline
\multirow{2}*{$\phi_{03}$}  & 
IMF  & 16.02 &  33.21  &  80.24  &  160.02 &  325.60 \\
\cline{2-7} 
& Projected IMF & 9.17  & 18.43 &  45.71  &  91.13  &  183.00   \\
\hline
\end{tabular}
\vskip 2mm
\caption{CPU time (seconds) comparison. ``IMF" means the original IMF in $H^{-1}$ metric; ``Projected IMF" is in $L^2$ metric.} \label{1D_case}
\end{table}

\subsection{2D example: Landau-Brazovskii free energy}
In this section, we study the nucleation problem of phase transition in diblock copolymers \cite{Tiejun2D, wise2009energy}, which have attracted a lot attention because of their various and abundant microstructures.  The model is described by the two-dimensional Landau-Brazovskii energy functional of the order parameter $\phi$,
\begin{equation}\label{F_LB}
F(\phi) = \int_\Omega   \frac{\xi^2}{2}[(\Delta+1)\phi(\textbf{r})]^2 + \Phi(\phi) ~d\,\textbf{r},
\end{equation}
defined on $\Omega = [0,\frac{16 \pi}{\sqrt{3}}] \times [0,8 \pi]$, where $\Phi(\phi) = \frac{\tau}{2}\phi^2 - \frac{\gamma}{3!}\phi^3 + \frac{1}{4!}\phi^4$.
The parameters are $\tau=-0.15, \xi=1.0, \gamma = 0.25$.
We hope to calculate the transition state of $F(\phi)$
 in the $H^{-1}$ metric. First we calculate the first and the second order variations as follows
 \begin{align*}
 \delta_\phi F(\phi) &= \xi^2(\Delta +1)^2 \phi(\textbf{r}) + \Phi^\prime(\phi), \\
 \delta^2_\phi F(\phi) &= \xi^2(\Delta +1)^2 + \Phi^{\prime\prime}(\phi):=\Hess,
 \end{align*}
 where $\Phi^\prime(\phi) = \tau \phi - \frac{\gamma}{2}\phi^2 + \frac{1}{3!}\phi^3, \Phi^{\prime\prime}(\phi) = \tau - \gamma \phi + \frac{1}{2}\phi^2$.
So the projected IMF is
  \begin{numcases}{}
v^{(k+1)} = \operatornamewithlimits{argmin}_{\|v\|=1} \left\langle v, \Pj\Hess\Pj(\phi^{(k)})v \right\rangle_{L^2},\label{IMF_2Dex1}\\
\phi^{(k+1)} = \operatornamewithlimits{argmin}_{\int_\Omega \phi(x) dx=m} L(\phi;\phi^{(k)},v^{(k+1)}),
\label{IMF_2Dex2}
\end{numcases}
with $L(\phi) = F(\phi) - F(\hat\phi)$, $\hat\phi$ is defined in \eqref{phi_hat}.
The second minimization  sub-problem is solved by evolving the gradient flow:
$$ \frac{\partial\phi}{\partial t}  =  - \Pj \delta_\phi L(\phi),
 $$
 where
$$  - \Pj \delta_\phi L(\phi) = -\Pj \big[ \xi^2(\Delta +1)^2 \phi + \Phi^\prime(\phi) \big] + 2  \inpd{v}{ \xi^2(\Delta +1)^2 \hat\phi + \Phi^\prime(\hat\phi) }_{L^2} \Pj v, $$
here $\phi = \phi^{(k+1)}$. And the projected GAD is

  \begin{numcases}{}
\frac{\partial \phi}{\partial t} = -\Pj \big[\xi^2(\Delta +1)^2 \phi + \Phi^\prime(\phi)\big] + 2 \frac{\inpd{\xi^2(\Delta +1)^2 \phi + \Phi^\prime(\phi)}{v}_{L^2} }{\inpd{v}{v}_{L^2}} \Pj v,\label{GAD-2DexPj1-x}\\
\gamma \frac{\partial v}{\partial t}  =  - \Pj \big[ \xi^2(\Delta +1)^2 + \Phi^{\prime\prime}(\phi) \big] \Pj v + \inpd{v} {  \xi^2(\Delta +1)^2 v + \Phi^{\prime\prime}(\phi)v }_{L^2} v.
\end{numcases}

For this two-dimensional numerical example, we consider the periodic boundary condition again. And for the convenience of saving computational cost, we apply the fast Fourier transform (FFT) for the two-dimensional case. We take the mesh points $Nx=Ny=64$ and the time step size $\Delta t = 0.1$.
The transition state can be obtained  by the projected IMF (or the projected GAD) in Figure \ref{figA:saddle_LB}. The quadratic convergence rate can also be obtained for the projected IMF shown in Figure \ref{figB:ProjIMF_conv_rate}. Similarly to the one-dimensional case, in order to illustrate the effect of the projected method, we make comparison with the original IMF in  $H^{-1}$ metric. We fix various inner iteration number for both cases and compare the required CPU time. Table \ref{2D_case} shows the CPU time comparison between the projected IMF and the original IMF in $H^{-1}$ metric with various initial states. Results show that the projected method for this example can save almost one-third computational cost.

\begin{figure}[htbp]
\begin{center}
\begin{subfigure}[b]{0.45\textwidth}
\includegraphics[width=\textwidth]{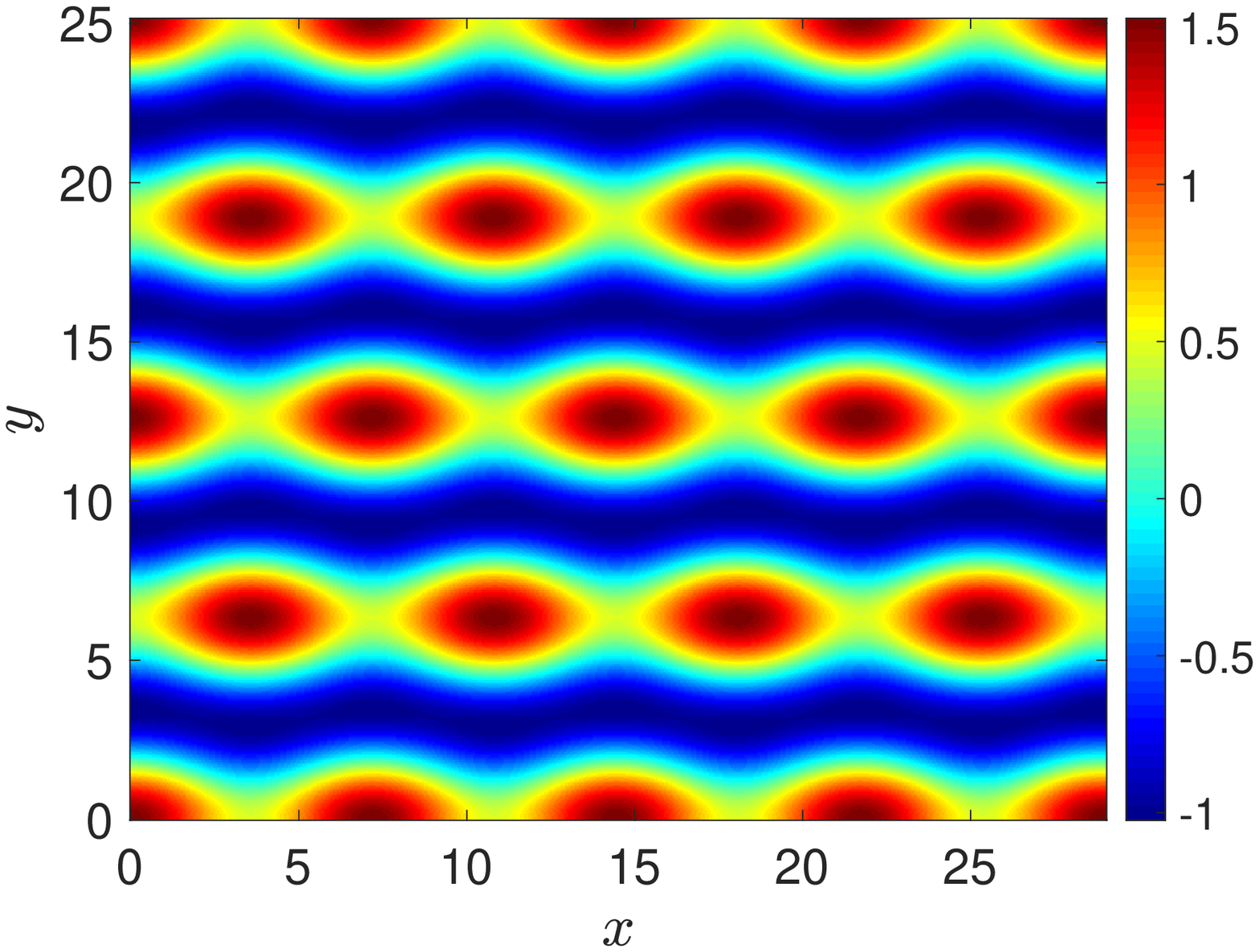}
\caption{transition state}
\label{figA:saddle_LB}
\end{subfigure}
\begin{subfigure}[b]{0.45\textwidth}
\includegraphics[width=\textwidth]{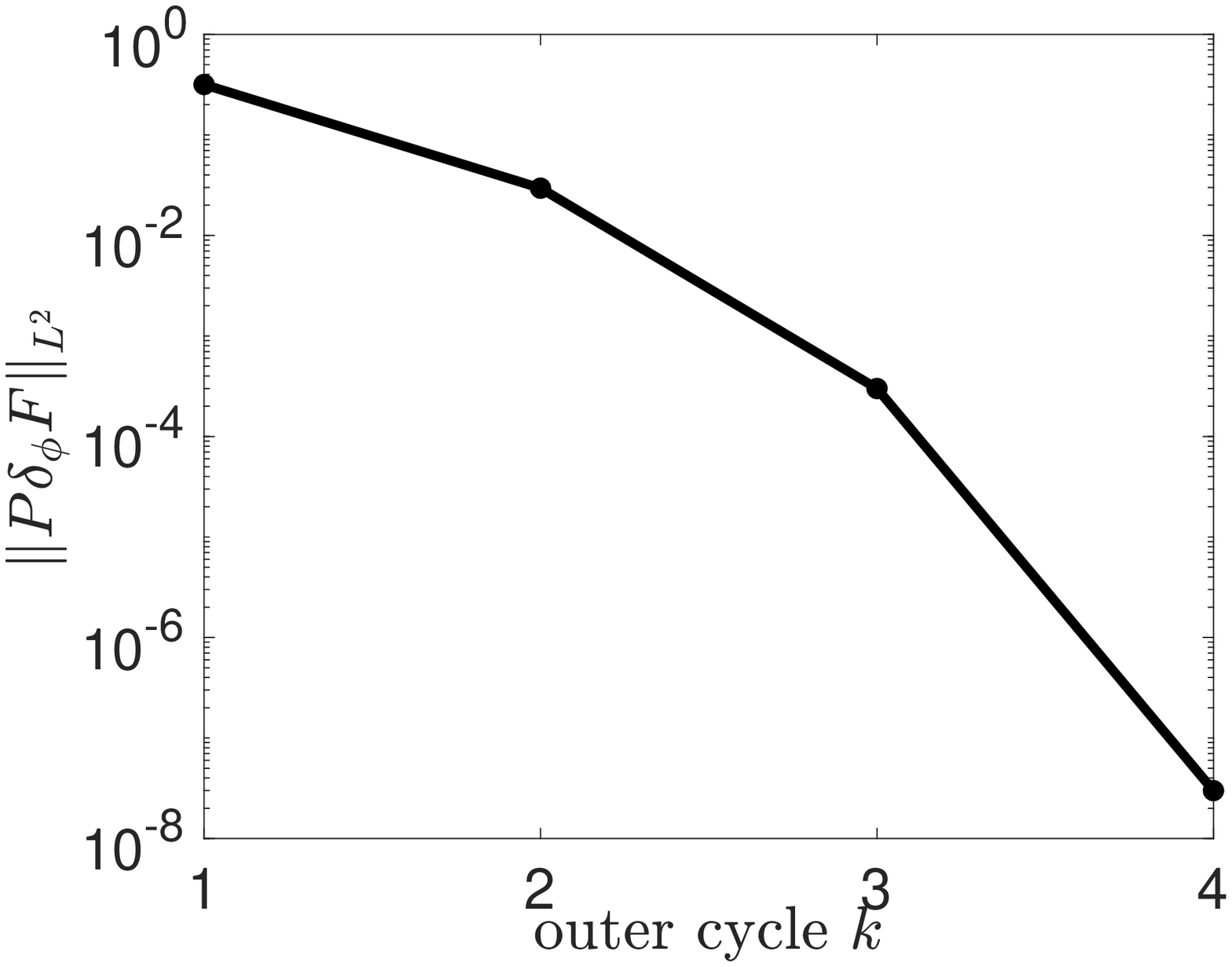}
\caption{convergence rate}
\label{figB:ProjIMF_conv_rate}
\end{subfigure}
\caption{~ (A): Transition state of the Landau Brazovskii free energy in the $H^{-1}$ metric by the projected IMF. (B): The decay of the error $\| \Pj \delta_\phi F(\phi^{(k)})\|_{L^2}$ measured by the $L^2$ norm of the projected force at each cycle $k$.}
\label{saddle_LB}
\end{center}
\end{figure}

\begin{table}[htbp]
\centering
\begin{tabular}{| c | c | c | c | c | c | c | c | c | c |  }
\hline
\multicolumn{2}{|c|}
{iterN}    &  $5e3$  & $6e3$  &  $7e3$ &  $8e3$  & $9e3$  & $1e4$   \\
\hline
\multirow{2}*{$\phi_{01}$}  & 
       IMF               &  12.62   &  15.27  &  17.67  &  20.35   &  22.71  &  25.84  \\
\cline{2-8} 
& Projected IMF  &  9.03     &   11.05  &  12.71 &  14.07  &  16.06  &  17.62  \\
\hline
\hline
\multirow{2}*{$\phi_{02}$}  & 
IMF  & 12.71  &  15.56   &  17.93   &  20.67   &  22.97    &  25.32   \\
\cline{2-8} 
& Projected IMF & 9.05  & 10.71 &  12.67   &  14.47  &  16.34   &  17.76  \\
\hline
\hline
\multirow{2}*{$\phi_{03}$}  & 
  IMF     &   12.77   &   15. 82  &    17.39   &   19.90   &   22.01   &   25.51  \\
\cline{2-8} 
& Projected IMF &  9.07 &   11.15   &  12.95    &  14.56    &    16.19   &  18.07   \\
\hline
\end{tabular}
\vskip 2mm
\caption{CPU time (seconds) comparison.  ``IMF" means the original IMF in $H^{-1}$ metric; ``Projected IMF" is in $L^2$ metric.} \label{2D_case}
\end{table}

\section{Conclusion}

In this work, we present the projected method for the IMF and the GAD to calculate the transition states of some energy functional in the $H^{-1}$ metric. By introducing an orthogonal projection operator onto the confined subspace satisfying the mass conservation, the saddle points in $H^{-1}$ metric calculation can be transformed equivalently to the saddle points in $L^2$ metric with projection.  This method can reduce much computational cost. Since it
leads to a lower order spatial derivative equation for the translation step in the IMF  compared with that in $H^{-1}$ metric directly;  more importantly, it avoids the $\Delta^{-1}$ operator calculation. The same phenomenon can be obtained for the projected GAD.
This projected method maintains the same convergence speed of the original GAD and IMF,
but the new algorithm is much faster than the direct method for $H^{-1}$ problem. 

\section*{Acknowledgement}
SG acknowledges the support of NSFC 11901211,  the youth innovative talent project of Guangdong province 2018KQNCX055 and the young teacher scientific research cultivation fund of South China Normal University 18KJ17. LL acknowledges the support of NSFC 11871486.
XZ acknowledges the support of Hong Kong RGC GRF grants 11337216 and 11305318.

\newpage

\bibliography{CVXIMF,my,gad,ms,MsGAD}
 
\bibliographystyle{siam} 

\end{document}